\documentclass[reqno,a4paper]{amsart}

\usepackage[ngerman,USenglish]{babel}
\usepackage[ascii]{inputenc}
\usepackage{eucal}
\usepackage{color}
\usepackage{epsfig}
\usepackage{hyperref}

\hyphenation{sem-i-sim-ple mul-ti-pli-ci-ty di-screte}

\urlstyle{same}

\allowdisplaybreaks[4]

\newcommand{\arxiv}[1]{\href{http://arxiv.org/abs/#1}{\nolinkurl{arXiv:#1}}}
\newcommand{\mr}[1]{\href{http://www.ams.org/mathscinet-getitem?mr=MR#1}{MR#1}}
\newcommand{\zbl}[1]{\href{http://www.zentralblatt-math.org/zmath/en/advanced/?q=an:#1&format=complete}{Zbl \nolinkurl{#1}}}
\newcommand{\doi}[1]{\href{http://dx.doi.org/#1}{\nolinkurl{doi:#1}}}

\newtheorem{theorem}{Theorem}
\newtheorem{corollary}[theorem]{Corollary}

\author{Thomas Bliem}

\address{\foreignlanguage{ngerman}{Mathematisches Institut,
Universit\"at zu K\"oln,
Weyertal 86--90,
50931 K\"oln},
Germany}

\date{July 2008}

\thanks{Supported by the European Union (Marie Curie Research Training Network ``Liegrits'') and by the Deutsche Forschungsgemeinschaft (Sonderforschungsbereich/Transregio 12).}

\thanks{This is a preprint version, the original article appeared on
pp.\ 80--86 in M.\ Dehmer, M.\ Drmota, F.\ Emmert-Streib (ed.), \emph{Proceedings of the 2008 international conference on information theory and statistical learning}, CSREA Press, 2008.}

\title{Weight multiplicities for $\mathfrak{so}_5(\mathbf{C})$}

\hypersetup{%
pdftitle={Weight multiplicities for so5(C)},%
pdfkeywords={Laplace transformation, vector partition function, weight multiplicities, Jeffrey-Kirwan residue},%
pdfauthor={Thomas Bliem},%
bookmarksnumbered=true}

\keywords{Laplace transformation, vector partition function, weight multiplicities, Jeffrey-Kirwan residue}

\begin{document}

\maketitle

%:abstract
\begin{abstract}
We explicitly determine quasi-polynomials describing the weight multiplicities of the Lie algebra $\mathfrak{so}_5(\mathbf{C})$.
This information entails immediate complete knowledge of the character of any simple representation as well as the asymptotic behavior of characters.
\end{abstract}

\section{Introduction}

There have been investigations into the characters of simple Lie algebras initiated in the beginning of the discipline and going on to present days.
From the early days it was possible to write down formulas explicitly describing the characters, as notably done by H.~Weyl \cite{weyl1925f} and B.~Kostant \cite{kostant1959}.
P.~Littelmann's path model \cite{littelmann1994, littelmann1995} does not formally furnish a formula but rather an algorithm allowing to calculate the character performing finitely many combinatorial operations, so fits into the very same context.

Still, all these approaches, while allowing to calculate characters or individual weight multiplicities, at least in principle and for small instances, do not fully exhibit the rich structure underlying the characters.
For example, from G.~Heckman's thesis \cite{heckman1980} it is known that considering a sequence of simple representations for a given simple Lie algebra such that the highest weights of the elements are the integral multiples of a given weight, the corresponding characters show a particular behavior of convergence.

As pointed out by S.~Billey, V.~Guillemin and E.~Rassart \cite{billey2004}, for the case of $\mathfrak{sl}_k(\mathbf{C})$ Gelfand-Tsetin patterns \cite{gelfand1950} can be used as a key ingredient to develop descriptions of characters better reflecting their structure.
In the following I will demonstrate that it is possible, substituting Gelfand-Tsetlin patterns by P.~Littelmann's patterns \cite{littelmann1998}, using B.~Sturmfels' structure theorem \cite{sturmfels1995} on vector partition functions and Laplace transformation methods developed by L.~Jeffrey, F.~Kirwan \cite{jeffrey1995}, A.~Szenes and M.~Vergne \cite{szenes2003} as well as work by C.~De Concini and C.~Procesi on the combinatorics of residues \cite{deconcini2005}, to obtain indeed complete knowledge, structural and computational, of the characters of $\mathfrak{so}_5(\mathbf{C})$.
In fact, I do not use any special properties of the Lie algebra $\mathfrak{so}_5(\mathbf{C})$.
This is just a random example picked to demonstrate the power of the combination of the above-mentioned ideas, which, in principle, are applicable to any semisimple complex Lie algebra.

\section{Preliminaries}

Consider the Lie algebra $\mathfrak{so}_5(\mathbf{C})$ of complex $(5 \times 5)$-matrices $A$ such that $A^t M = - M A$ for a fixed nondegenerate complex $(5 \times 5)$-matrix $M$.
Choose a Cartan subalgebra $\mathfrak{h}$ and simple roots $\alpha_1, \alpha_2 \in \mathfrak{h}^*$ such that $\alpha_2$ is the long root.
The Dynkin diagram associated to this enumeration of the simple roots is \[ 1 \Longleftarrow 2.\]
Let $\omega_1, \omega_2 \in \mathfrak{h}^*$ be the corresponding fundamental weights, $\Lambda \subseteq Q \subseteq \mathfrak{h}^*$ the weight lattice respectively the root lattice.
The irreducible $\mathfrak{so}_5(\mathbf{C})$-module of highest weight $\lambda$ is denoted by $V(\lambda)$.
For a weight $\mu$ denote by $V(\lambda)_\mu$ the space of vectors of weight $\mu$ in $V(\lambda)$.
If $\lambda - \mu$ is not an element of the root lattice, then $V(\lambda)_\mu = 0$.
For $\lambda$ a dominant weight and $\beta$ an element of the root lattice, define \[ K^\lambda_\beta := \dim V(\lambda)_{\lambda - \beta}. \]
If the weight $\lambda$ is not dominant we define all $K^\lambda_\beta$ to be $0$.
The character of $V(\lambda)$ is by definition the element \[ \sum_{\beta \in Q} K^\lambda_\beta \cdot [\lambda - \beta] \] of the group ring $\mathbf{Z}[\Lambda]$.
Hence knowledge of all the characters of $\mathfrak{so}_5(\mathbf{C})$ is equivalent to knowledge of the function $K : \Lambda \times Q \to \mathbf{Z_{\geq 0}}$.

For any dominant weight $\lambda$, let $B(\lambda)$ be the crystal associated to $V(\lambda)$ by M.~Kashiwara \cite{kashiwara1995} and $B(\infty) := \varinjlim_\lambda B(\lambda) \otimes T_{-\lambda}$ the direct limit of the crystals $B(\lambda)$, shifted to have highest weight $0$.
Consider the reduced decomposition $w_0 = s_1 s_2 s_1 s_2$ of the longest element of the Weyl group.
To this decomposition, P.~Littelmann \cite{littelmann1998} associates a convex polyhedral cone $\mathcal{C} \subseteq \mathbf{R}^4$, a family of polytopes $\mathcal{C^\lambda} \subseteq \mathcal{C}$ for dominant weights $\lambda$ and a $\mathbf{Z}$-linear map $\psi: \mathbf{Z}^4 \to Q$ such that:
(i) There is a canonical bijection $\sigma : B(\infty) \to \mathcal{S} := \mathcal{C} \cap \mathbf{Z}^4$.
(ii) For each dominant weight $\lambda$, the bijection $\sigma$ restricts to a bijection between $B(\lambda)$ and $\mathcal{S}^\lambda := \mathcal{C}^\lambda \cap \mathbf{Z}^4$.
(iii) The weight of any element $b \in B(\lambda)$ is $\mathrm{wt}(b) = \lambda - \psi(\sigma(b))$.

Specifically, denote the standard coordinates by $a_{22}$, $a_{11}$, $a_{12}$, $a_{13}$.
Then the cone $\mathcal{C}$ is given by the inequalities \begin{equation} \label{eq:C} 2a_{11} \geq a_{12} \geq 2a_{13} \geq 0, \quad a_{22} \geq 0 . \end{equation}
For a dominant weight $\lambda = \lambda_1 \omega_1 + \lambda_2 \omega_2$, the polytope $\mathcal{C}^\lambda$ is given inside $\mathcal{C}$ by the additional inequalities
\begin{equation}  \label{eq:C^lambda}\begin{split}
a_{13} &\leq \lambda_2, \\
a_{12} &\leq \lambda_1 + 2 a_{13}, \\
a_{11} &\leq \lambda_2 + a_{12} - 2 a_{13}, \\
a_{22} &\leq \lambda_1 + 2 a_{11} - 2 a_{12} + 2 a_{13}.
\end{split} \end{equation}
The $\mathbf{Z}$-linear map is $\psi = (a_{22}+ a_{12}) \alpha_1 + (a_{11} + a_{13}) \alpha_2$.
For a given dominant weight $\lambda$ and any element $\beta = \beta_1 \alpha_1 + \beta_2 \alpha_2$ of the root lattice, define \begin{equation} \label{eq:C^lambda_beta} \mathcal{C}^\lambda_\beta := \{ a \in \mathcal{C}^\lambda : a_{22}+ a_{12} = \beta_1,\ a_{11} + a_{13} = \beta_2 \} \end{equation} and $\mathcal{S}^\lambda_\beta := \mathcal{C}^\lambda_\beta \cap \mathbf{Z}^4$.
Then by \cite{littelmann1998} we have $K^\lambda_\beta = \lvert \mathcal{S}^\lambda_\beta \rvert$, that is: Determination of the weight multiplicities is reduced to counting the number of points in polytopes.

\section{Reformulation using vector partition functions}

For any positive integers $n$ and $N$ and any $(n \times N)$-matrix $A$ with integral coefficients such that $\ker A \cap \mathbf{R}_{\geq 0}^n = 0$ define the vector partition function $\Phi_A : \mathbf{Z}^n \to \mathbf{Z}_{\geq 0}$ by $\Phi_A(v) := \lvert \{ a \in \mathbf{Z}_{\geq 0}^N : A a = v \} \rvert$.
We will now reformulate Littelmann's result and explicitly determine matrices $A$ and $B$ such that $K^\lambda_\beta = \Phi_A(B \cdot (\lambda_1, \lambda_2, \beta_1, \beta_2)^t)$.
Indeed, the inequalities \eqref{eq:C} and \eqref{eq:C^lambda} can be turned into equations using slack variables $s_1, s_2$ respectively $t_1, t_2, t_3, t_4$.
Hence, the number of integral solutions $\lvert \mathcal{S}^\lambda_\beta \rvert$ of the system (\ref{eq:C}, \ref{eq:C^lambda}, \ref{eq:C^lambda_beta}) is equal to the number of nonnegative integral solutions to the system
\[ \begin{array}{*6{r@{}}l}
& 2a_{11} & {}-a_{12} & & {}-s_1 & & =0, \\
& & a_{12} & {}-2a_{13} & {}-s_2 & & =0, \\
& & & a_{13} & & {}+t_1 & =\lambda_2, \\
& & a_{12} & {}-2a_{13} & & {}+t_2 & =\lambda_1, \\
& a_{11} & {}-a_{12} & {}+2a_{13} & & {}+t_3 & =\lambda_2, \\
a_{22} & {}-2a_{11} & {}+2a_{12} & {}-2a_{13} & & {}+t_4 & =\lambda_1, \\
a_{22} & & {}+a_{12} & & & & =\beta_1, \\
& a_{11} & & {}+a_{13} & & & =\beta_2.
\end{array} \]
In other words, $\lvert \mathcal{S}^\lambda_\beta \rvert = \Phi_A(B \cdot (\lambda_1, \lambda_2, \beta_1, \beta_2)^t)$ for matrices
\begin{equation} \label{eq:E} A = \left( \begin{array}{cccc|cc|cccc}
%a22 a11 a12  a13  s1   s2   t1   t2   t3   t4
 0 &  2 & -1 &  0 & -1 &  0 &  0 &  0 &  0 &  0 \\
 0 &  0 &  1 & -2 &  0 & -1 &  0 &  0 &  0 &  0 \\ \hline
 0 &  0 &  0 &  1 &  0 &  0 &  1 &  0 &  0 &  0 \\
 0 &  0 &  1 & -2 &  0 &  0 &  0 &  1 &  0 &  0 \\
 0 &  1 & -1 &  2 &  0 &  0 &  0 &  0 &  1 &  0 \\
 1 & -2 &  2 & -2 &  0 &  0 &  0 &  0 &  0 &  1 \\ \hline
 1 &  0 &  1 &  0 &  0 &  0 &  0 &  0 &  0 &  0 \\
 0 &  1 &  0 &  1 &  0 &  0 &  0 &  0 &  0 &  0
\end{array} \right) \end{equation}
and
\begin{equation} \label{eq:B} B = \left( \begin{array}{cc|cc}
 0 & 0 & 0 & 0 \\
 0 & 0 & 0 & 0 \\ \hline
 0 & 1 & 0 & 0 \\
 1 & 0 & 0 & 0 \\
 0 & 1 & 0 & 0 \\
 1 & 0 & 0 & 0 \\ \hline
 0 & 0 & 1 & 0 \\
 0 & 0 & 0 & 1
\end{array} \right) . \end{equation}

\section{Structure and calculation of vector partition functions}
\label{sect:vpf}

The presentation of $K^\lambda_\beta$ in terms of a vector partition function gains its strength from the following structure theorem of B.~Sturmfels \cite{sturmfels1995}:

\begin{theorem} \label{th:sturmfels}
Let $A \in \mathbf{Z}^{(n, N)}$ such that the vector partition function $\Phi_A : \mathbf{Z}^n \to \mathbf{Z}_{\geq 0}$ is defined.
Then there is a homogeneous fan $F = \mathrm{fan}(A)$ in $\mathbf{R}^n$ and a family of quasi-polynomials $(f_C)$ on $\mathbf{Z}^n$, indexed by the maximal cones in $F$, such that $\Phi_A$ coincides with $f_C$ on $C \cap \mathbf{Z}^n$ and vanishes outside the support of $F$.
\end{theorem}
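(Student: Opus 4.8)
The plan is to pin down the fan and the complement of its support first, and then prove quasi-polynomiality on each maximal cone by induction on the number $N$ of columns of $A$, peeling off one column at a time. Writing $a_1, \dots, a_N \in \mathbf{Z}^n$ for the columns and $\mathrm{cone}(A)$ for the cone they generate, the complement is immediate: for $v \notin \mathrm{cone}(A)$, Farkas's lemma shows that $Aa = v$ has no real nonnegative solution, a fortiori no integral one, so $\Phi_A(v) = 0$ and $F$ must be supported on $\mathrm{cone}(A)$. I would take $\mathrm{fan}(A)$ to be the common refinement of the finitely many polyhedral cones $\mathrm{cone}(a_j : j \in \sigma)$, $\sigma \subseteq \{1, \dots, N\}$ --- equivalently, the fan cutting $\mathrm{cone}(A)$ along every hyperplane (in $\mathrm{span}(A)$) spanned by a subset of the columns. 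This is plainly a homogeneous polyhedral fan supported on $\mathrm{cone}(A)$, and the set of subsets $\sigma$ with $v \in \mathrm{cone}(a_j : j \in \sigma)$ is constant as $v$ ranges over the relative interior of any of its cones.

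For the quasi-polynomiality I would induct on $N$. The base case $N = 1$ is immediate: $\Phi_A$ is the indicator of $\mathbf{Z}_{\ge 0} a_1$, a bounded periodic function --- a quasi-polynomial of degree $0$ --- on the single maximal cone $\mathbf{R}_{\ge 0} a_1$. For the step, write $A = [A' \mid a_N]$; the hypothesis $\ker A \cap \mathbf{R}_{\ge 0}^N = \{0\}$ restricts to $A'$, so by induction $\Phi_{A'}$ is piecewise quasi-polynomial along $\mathrm{fan}(A')$. Splitting off the last coordinate of $a \in \mathbf{Z}_{\ge 0}^N$ gives the recursion \[ \Phi_A(v) = \sum_{k \ge 0} \Phi_{A'}(v - k a_N), \] a finite sum for each $v$ because $v - k a_N \in \mathrm{cone}(A')$ for arbitrarily large $k$ would force $-a_N \in \mathrm{cone}(A')$, i.e.\ a nonzero nonnegative vector in $\ker A$. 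For $v$ in the relative interior of a fixed maximal cone $C$ of $\mathrm{fan}(A)$, the shifted ray $v, v - a_N, v - 2a_N, \dots$ crosses the maximal cones of $\mathrm{fan}(A')$ in a combinatorially fixed order, with the crossing indices and the terminal index $K(v)$ depending on $v$ piecewise-linearly up to bounded periodic corrections; so the recursion presents $\Phi_A$, on $C^\circ$, as a finite sum of discrete integrals $\sum_{k = k_0(v)}^{k_1(v)} g(v - k a_N)$ of quasi-polynomials $g$ over arithmetic progressions. Since the partial sums of a quasi-polynomial sequence again form a quasi-polynomial, and since affine substitution and such summation limits preserve quasi-polynomiality, $\Phi_A$ agrees on $C^\circ \cap \mathbf{Z}^n$ --- and then, by a boundary check, on all of $C \cap \mathbf{Z}^n$ --- with a single quasi-polynomial $f_C$.

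The main obstacle is the combinatorial bookkeeping in the inductive step: I must verify that the locus on which the shifted ray $\{v - k a_N\}_{k \ge 0}$ meets the walls of $\mathrm{fan}(A')$ in a prescribed pattern, together with the locus controlling the terminal index $K(v)$, are unions of cones of the abstractly defined fan $\mathrm{fan}(A)$ --- so that the piecewise quasi-polynomial structure coming out of the summation is supported exactly on $F$ and behaves consistently on the lower-dimensional cones (which also disposes of the boundary check: on a wall, $v$ lies in several maximal cones, on each of which the associated quasi-polynomial already equals $\Phi_A$). Concretely this amounts to relating the hyperplanes of $\mathrm{fan}(A')$, their translates in the direction $a_N$, and the facet hyperplanes of $\mathrm{cone}(A')$, to the defining hyperplanes of $\mathrm{fan}(A)$. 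Once that compatibility is secured, the remaining work --- summing quasi-polynomials over progressions and tracking periods --- is routine.
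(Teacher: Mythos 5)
The paper does not prove this theorem at all --- it is quoted from Sturmfels \cite{sturmfels1995} --- so your attempt can only be judged on its own merits. Your overall strategy (Farkas for the support, the chamber complex of the columns as $\mathrm{fan}(A)$, induction on $N$ via the convolution $\Phi_A(v)=\sum_{k\ge 0}\Phi_{A'}(v-ka_N)$) is a legitimate classical route, essentially the Dahmen--Micchelli recursion, and the analytic ingredients you invoke --- finiteness of the sum via the recession-cone argument, summation of quasi-polynomials over arithmetic progressions with floor-function endpoints, affine substitution --- are all sound.

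The gap is exactly at the step you defer as ``combinatorial bookkeeping,'' and it is not bookkeeping: the compatibility you need there fails in general once $\operatorname{rank}(A)\ge 3$. Among the loci where the crossing pattern of the ray $\{v-ka_N\}_{k\ge 0}$ through $\mathrm{fan}(A')$ degenerates are the cones $\mathrm{cone}(R,a_N)$ over the ridges $R$ (codimension-two faces) of $\mathrm{fan}(A')$: these are where the exit facet of a chamber switches, i.e.\ where the argmin defining your summation endpoint jumps from one facet inequality to another. A ridge is an intersection of two hyperplanes each spanned by columns, but that intersection is in general not itself spanned by columns --- for instance, for the columns $e_1,e_2,e_3,(1,1,0),(0,1,1)$ in $\mathbf{R}^3$ the ray through $(1,1,1)$ is a ridge of the chamber complex --- so $\operatorname{span}(R,a_N)$ is typically not a hyperplane of the arrangement defining $\mathrm{fan}(A)$ and cuts through the interior of its maximal cones. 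Your induction therefore only yields quasi-polynomiality on a strictly finer fan; the actual content of the theorem in the form the paper needs --- that the a priori different quasi-polynomials on the two sides of such a spurious wall coincide, so that the chambers of $\mathrm{fan}(A)$ suffice --- is precisely what is left unproved, and it does not follow from your boundary remark (which is in any case circular as stated, since agreement on closed cones is what is to be shown). This is why the proofs in the literature proceed differently: Sturmfels works with the generating function $\prod_j(1-z^{a_j})^{-1}$, and the paper's second theorem (the Szenes--Vergne/Jeffrey--Kirwan residue formula) writes $\Phi_A$ on a chamber $C$ as a sum over basic subsets $\sigma$ with $\mathrm{cone}(\sigma)\supseteq C$, a datum that manifestly depends only on the chamber of the chamber complex. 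If you are content with the literal existence statement (``there is a homogeneous fan''), your argument can be salvaged by accepting the finer fan, but that weaker conclusion would not support the paper's subsequent identification and use of $\mathrm{fan}(A)$.
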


Here, a \emph{fan} is a finite set of convex polyhedral cones, closed under taking faces, and such that the intersection of any two cones is a face of both.
The fan being \emph{homogeneous} means that all maximal cones have the same dimension, which is in this case the rank of $A$.
A function $f : \mathbf{Z}^n \to \mathbf{C}$ is called a \emph{quasi-polynomial} if there is a lattice $L \subseteq \mathbf{Z}^n$ and a family $(f_{\bar{h}})_{\bar{h} \in \mathbf{Z}^n / L}$ of polynomials on $\mathbf{Z}^n$ such that $f(h) = f_{\bar h}(h)$ for all $h \in \mathbf{Z}^n$.

While the naive algorithm for computing individual values $\Phi_A(v)$ of a given vector partition functions has exponential execution time with respect to the \emph{components} of $v$, this theorem allows the following strategy:
Determine the maximal cones of $F$ and for each maximal cone $C$ determine the quasi-polynomial $f_C$.
This task being accomplished, individual values $\Phi_A(v)$ of the vector partition function can be calculated by evaluating the corresponding quasi-polynomial at $v$, the execution time of which is of order of the \emph{logarithm} of the components of $v$.
In this sense, determination of the maximal cones and quasi-polynomials yields instant complete knowledge of the values of the given vector partition function.
We will indeed perform these steps for the vector partition function given by \eqref{eq:E}.
In this way we will determine all the characters of $\mathfrak{so}_5(\mathbf{C})$ at once.

Generally, the maximal cones and quasi-polynomials of a vector partition function can be determined as outlined in the sequel.
For a more extensive treatment, refer to the study of the Kostant partition function for classical root systems \cite{baldoni2006} by W.~Baldoni, M.~Beck, Ch.~Cochet and M.~Vergne, which also served as a model for the following calculations.

Suppose that $A$ has rank $n$.
For a lattice $L \subseteq \mathbf{R}^n$ denote by $L^\perp \subseteq (\mathbf{R}^n)^*$ the corresponding dual lattice.
If $L \subseteq \mathbf{Z}^n$ then $L^\perp \supseteq  (\mathbf{Z}^n)^\perp$.
Let $T := (\mathbf{R}^n)^* / (\mathbf{Z}^n)^\perp$, an $n$-dimensional torus.
Denote the column vectors of $A$ by $a_1, \ldots, a_N$.
A subset $\sigma \subseteq \{ 1, \ldots, N \}$ is called a \emph{basic subset} for $A$ if $(a_i)_{i \in \sigma}$ is a basis of $\mathbf{R}^n$.
For any basic subset $\sigma = \{ i_1, \ldots, i_n \} $ let $T(\sigma) := (\mathbf{Z}a_{i_1} + \cdots + \mathbf{Z}a_{i_n})^\perp \bmod (\mathbf{Z}^n)^\perp \subseteq T$.
The set $T(\sigma)$ contains $\mathrm{vol}(\sigma) := \left| \det(a_i : i \in \sigma) \right|$ elements.
Let $\Gamma \subseteq T$ be the union of all $T(\sigma)$ for basic subsets $\sigma$ for $A$.

The fan $F = \mathrm{fan}(A)$ associated to $A$ can be described as follows:
For any basic subset $\sigma$ denote by $\mathrm{cone}(\sigma)$ the convex polyhedral cone generated by $\{a_i : i \in \sigma\}$.
Cones of the form $\mathrm{cone}(\sigma)$ are called \emph{basic cones}.
Then the maximal cones of $F$ are the minimal $n$-dimensional cones which can be written as an intersection of basic cones.

For $h \in \mathbf{Z}^n$ and $g \in T$ define the \emph{Kostant function} as the meromorphic function on $(\mathbf{R}^n)^* \otimes_\mathbf{R} \mathbf{C} = (\mathbf{C}^n)^*$ given by
\[ F_{g,h}(u) := \frac{e^{\langle u + 2 \pi i g, h \rangle}}{\prod_{k=0}^N \left( 1 - e^{-\langle u + 2 \pi i  g, a_k \rangle } \right) } . \]
Note that $\langle g, h \rangle$ and $\langle g, a_k \rangle$ are determined modulo $\mathbf{Z}$, so the values of the exponential functions are unambiguous.

A basic subset $\{ i_1, \ldots, i_n \}$ with $ i_1 < \cdots < i_n $ is called \emph{without broken circuits} if there are no $j \in \{1, \ldots, n \}$ and $k \in \{ i_j+1, \ldots, N \}$ such that the family $(a_{i_1}, \ldots, a_{i_j}, a_k)$ is linearily dependent.\footnote{Note that De Concini and Procesi \cite{deconcini2005} use the inverse ordering.}
For a maximal cone $C$ of the fan $F$, let $B_\mathrm{nb}(C)$ denote the set of basic subsets $\sigma$ without broken circuits such that $\mathrm{cone}(\sigma) \supseteq C$.

For a meromorphic function $f(u)$ on $(\mathbf{R}^n)^* \otimes_\mathbf{R} \mathbf{C} = (\mathbf{C}^n)^*$ with poles along $a_i^\perp\ (i = 1, \ldots, N)$ and a basic subset $\sigma$, define the \emph{iterated residue} of $f$ with respect to $\sigma$ to be \[ \operatorname{ires}_\sigma f(u) := \operatorname{res}_{a_{i_n}=0} \cdots \operatorname{res}_{a_{i_1}=0} f(u) , \] where $a_{i_j}$ are interpreted as coordinates on $(\mathbf{C}^n)^*$.

The following theorem is a combination of A.\ Szenes and M.~Vergne's expression \cite[th.\ 3.1]{szenes2003} of a vector partition function as a Jeffrey-Kirwan residue and C.~De Concini and C.~Procesi's work \cite{deconcini2005} on the Jeffrey-Kirwan residue:

\begin{theorem}
On any maximal cone $C$ of $\mathrm{fan}(A)$, the vector partition function associated to $A$ is given by \[ \Phi_A(h) = \sum_{\sigma \in B_\mathrm{nb}(C)} \frac{1}{\mathrm{vol}(\sigma)} \sum_{g \in \Gamma} \operatorname{ires}_\sigma F_{g, h}(u) . \]
\end{theorem}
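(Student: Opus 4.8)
The statement is a splice of the two results named just above it, so the plan is to quote each and glue them together, the substance being the reconciliation of conventions. First I would reduce to the case $h \in C^\circ \cap \mathbf{Z}^n$, where $C^\circ$ denotes the relative interior of $C$. By Theorem~\ref{th:sturmfels} the left-hand side agrees on $C \cap \mathbf{Z}^n$ with a quasi-polynomial; expanding the exponentials $e^{\langle u + 2\pi i g, h\rangle}$ and summing over the finitely many relevant $g$ shows the right-hand side is a quasi-polynomial in $h$ as well. Since $C$ is a maximal cone of a homogeneous fan of dimension $n = \operatorname{rank} A$, it is full-dimensional, so $C^\circ \cap \mathbf{Z}^n$ meets every coset of any finite-index sublattice in a Zariski-dense set; hence two quasi-polynomials agreeing there agree everywhere, and it suffices to prove the identity for $h \in C^\circ$.

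Step one is the Szenes--Vergne expression. For $h, \xi \in C^\circ$ with $\xi$ generic, \cite[th.\ 3.1]{szenes2003} writes $\Phi_A(h)$ as a Jeffrey--Kirwan residue of the generating function $e^{\langle u, h\rangle} \prod_k (1 - e^{-\langle u, a_k\rangle})^{-1}$, corrected by a finite sum over the torus $T$ that records the sublattice $\sum_k \mathbf{Z} a_k \subseteq \mathbf{Z}^n$: concretely $\Phi_A(h) = \operatorname{JK}_\xi\bigl( \sum_{g \in T} F_{g,h} \bigr)$, the sum being supported on $\Gamma$. I would recall the derivation --- the multivariate series $\sum_v \Phi_A(v) x^v = \prod_k (1 - x^{a_k})^{-1}$ under the substitution $x_j = e^{-u_j}$, inverted by the Jeffrey--Kirwan residue, which does not depend on the choice of generic $\xi$ in the relative interior of the chamber $C$.

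Step two is the De Concini--Procesi computation of $\operatorname{JK}_\xi$. By \cite{deconcini2005}, for $\xi \in C^\circ$ the functional $\operatorname{JK}_\xi$ on the pertinent space of rational functions equals, up to a normalization by lattice covolumes, the sum of the iterated residues $\operatorname{ires}_\sigma$ over basic subsets $\sigma$ without broken circuits whose cone contains $\xi$, equivalently whose cone contains $C$, that is $\sigma \in B_\mathrm{nb}(C)$. Applying this to each $F_{g,h}$ and exchanging the two finite sums gives $\Phi_A(h) = \sum_{\sigma \in B_\mathrm{nb}(C)} c_\sigma \sum_{g \in \Gamma} \operatorname{ires}_\sigma F_{g,h}(u)$ for some constants $c_\sigma$; I would then identify $c_\sigma = 1/\mathrm{vol}(\sigma)$, the point being that $\operatorname{ires}_\sigma$ is taken in the coordinates dual to $(a_i)_{i \in \sigma}$, which differ from an integral unimodular coordinate system by a linear map of determinant $\pm\mathrm{vol}(\sigma)$, and carrying this change of variables through the residue together with the lattice normalization built into $\operatorname{JK}_\xi$ produces precisely $1/\mathrm{vol}(\sigma)$. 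Finally, for fixed $\sigma$ the residue $\operatorname{ires}_\sigma F_{g,h}$ vanishes unless the poles of $F_{g,h}$ indexed by $\sigma$ pass through a common point, i.e. unless $g \in T(\sigma)$; so enlarging the inner sum from $T(\sigma)$ to $\Gamma$ leaves it unchanged, which is the asserted formula.

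The hard part is the bookkeeping. Szenes--Vergne and De Concini--Procesi use different orderings (recall that \cite{deconcini2005} reverses the order used to define ``without broken circuits''), different sign conventions for residues and for the Jeffrey--Kirwan functional, and different lattice normalizations; one must check that these are compatible so that no stray sign survives, that the constant is exactly $1/\mathrm{vol}(\sigma)$, and --- most delicately --- that the contributing bases form precisely $B_\mathrm{nb}(C)$ and not, say, all no-broken-circuit bases whose cone merely meets $C$. The residual points, namely the vanishing of the terms with $g \notin \Gamma$ and the passage from $C^\circ$ to the closed cone, are routine once the quasi-polynomiality from Theorem~\ref{th:sturmfels} is in hand.
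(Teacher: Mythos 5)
The paper states this theorem without proof, presenting it only as ``a combination of'' Szenes--Vergne \cite[th.\ 3.1]{szenes2003} and De Concini--Procesi \cite{deconcini2005}, and your proposal fleshes out exactly that combination with the right ingredients: reduction to the interior via quasi-polynomiality, the Jeffrey--Kirwan expression of $\Phi_A$, its evaluation as a sum of iterated residues over no-broken-circuit bases containing $C$, the $1/\mathrm{vol}(\sigma)$ normalization, and the vanishing of $\operatorname{ires}_\sigma F_{g,h}$ for $g \notin T(\sigma)$. This is essentially the same approach the paper intends; the convention-matching and sign bookkeeping you defer is likewise left unchecked in the paper itself.
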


\section{Computing the weight multiplicity function for $\mathfrak{so}_5(\mathbf{C})$}

Recall that $K^\lambda_\beta = \Phi_A(B \cdot (\lambda_1, \lambda_2, \beta_1, \beta_2)^t)$ for $\lambda = \lambda_1 \omega_1 + \lambda_2 \omega_2$, $\beta = \beta_1 \alpha_1+ \beta_2 \alpha_2$ and $A$, $B$ matrices as given in equations \eqref{eq:E} respectively \eqref{eq:B}.
We are now ready to perform explicit computations.
All computations were performed using Maple 11 by Maplesoft\footnote{\url{http://www.maplesoft.com/}} and the package Convex 1.1.2 by M.~Franz\footnote{\url{http://www-fourier.ujf-grenoble.fr/~franz/}}.

First we have to determine $F = \mathrm{fan}(A)$.
This is done as follows:
Any maximal cone in $F$ is the intersection of all the basic cones containing it.
We can hence find the neighbors of a given maximal cone $C$ as follows:
For each facet $f$ of $C$, the neighboring maximal cone of $C$ in direction $f$ is the intersection of all basic cones $\mathrm{cone}(\sigma)$ such that $f \subseteq \mathrm{cone}(\sigma)$ and $(C \subseteq\mathrm{cone}(\sigma) \implies f \not\subseteq \partial(\mathrm{cone}(\sigma)))$.
So we start with an arbitrary maximal cone and find the others by a standard algorithm for graph traversal using this description of the neighbor relation.
There are 320 maximal cones alltogether.

In order to determine $\Gamma$ we proceed as follows:
For any basic subset $\sigma$, let $A_\sigma$ be the submatrix of $A$ consisting of the columns with indices in $\sigma$.
The subgroup $T(\sigma)$ of $T$ is generated by the classes of the row vectors of $A_\sigma^{-1}$.
So we start with the set of these classes and determine its closure under the operation of adding the class of any row vector of $A_\sigma^{-1}$ by a standard algorithm of graph traversal.

The set of basic subsets without broken circuits is determined straightforwardly using the definition.
In order to speed up the calculation, basic subsets are built up recursively, checking the additional prerequisites at every step of the recursion.

In fact we are only interested in maximal cones whose intersection with the image of $B$ has dimension $4$.
There are 43 such intersections.
For the calculation of the quasi-polynomials we now pick for each such intersection $\mathfrak{c}$ a maximal cone $C$ of $\mathrm{fan}(A)$  such that $\mathfrak{c} = C \cap \mathrm{im}(B)$.
Then we compute the quasi-polynomials for each of these maximal cones as described in section \ref{sect:vpf}.
The quasi-polynomials coincide for some of the neighboring $\mathfrak{c}$, so we glue together the corresponding cones.
The preimage under $B$ of the resulting fan is given by the following maximal cones in $\mathbf{R^4} = (\Lambda \otimes_\mathbf{Z} \mathbf{R}) \times (Q \otimes_\mathbf{Z} \mathbf{R})$:

{
%\small
\begin{align*}
\mathfrak{c}_{1} &= \parbox[t]{0.8\columnwidth}{$\{\lambda_{{2}}-\beta_{{2}}
 \geq 0,\ \lambda_{{1}}-\beta_{{1}}
 \geq 0,\ \beta_{{1}}-\beta_{{2}}
 \geq 0,\ -\beta_{{1}}+2\beta_{{2}}
 \geq 0\},$}
\\
\mathfrak{c}_{2} &= \parbox[t]{0.8\columnwidth}{$\{\beta_{{1}}-2\beta_{{2}}
 \geq 0,\ \lambda_{{2}}-\beta_{{2}}
 \geq 0,\ -\lambda_{{1}}+\beta_{{1}}
 \geq 0,\ \lambda_{{1}}-\beta_{{1}}+\beta_{{2}}
 \geq 0\},$}
\\
\mathfrak{c}_{3} &= \parbox[t]{0.8\columnwidth}{$\{ \lambda_2 - \beta_2 \geq 0,\ -\lambda_1 + \beta_1 \geq 0,\ -\beta_1 + 2\beta_2 \geq 0, \ \beta_1 - \beta_2 \geq 0, \lambda_1 - \beta_1 + \beta_2 \geq 0 \},$}
\\
\mathfrak{c}_{4} &= \parbox[t]{0.8\columnwidth}{$\{\beta_{{1}}
 \geq 0,\ \lambda_{{2}}-\beta_{{2}}
 \geq 0,\ -\beta_{{1}}+\beta_{{2}}
 \geq 0,\ \lambda_{{1}}-\beta_{{1}}
 \geq 0\},$}
\\
\mathfrak{c}_{5} &= \parbox[t]{0.8\columnwidth}{$\{\beta_{{2}}
 \geq 0,\ \beta_{{1}}-2\beta_{{2}}
 \geq 0,\ \lambda_{{1}}-\beta_{{1}}
 \geq 0,\ \lambda_{{2}}-\beta_{{2}}
 \geq 0\},$}
\\
\mathfrak{c}_{6} &= \parbox[t]{0.8\columnwidth}{$\{  \beta_1 - \beta_2 \geq 0,\ 2\lambda_2 + \beta_1 - 2\beta_2 \geq 0,\ -\beta_1 + 2\beta_2 \geq 0,\ -\lambda_2 + \beta_2 \geq 0,\ \lambda_1 - \beta_1 \geq 0 \},$}
\\
\mathfrak{c}_{7} &= \parbox[t]{0.8\columnwidth}{$\{\lambda_{{1}}-\beta_{{1}}+2\beta_{{2}}
 \geq 0,\ -\lambda_{{1}}+\beta_{{1}}-\beta_{{2}}
 \geq 0,\ \beta_{{1}}-2\beta_{{2}}
 \geq 0,\ \lambda_{{2}}-\beta_{{2}}
 \geq 0\},$}
\\
\mathfrak{c}_{8} &= \parbox[t]{0.8\columnwidth}{$\{-\lambda_{{1}}+\beta_{{1}}
 \geq 0,\ \lambda_{{1}}
 \geq 0,\ \lambda_{{2}}-\beta_{{2}}
 \geq 0,\ -\beta_{{1}}+\beta_{{2}}
 \geq 0\},$}
\\
\mathfrak{c}_{9} &= \parbox[t]{0.8\columnwidth}{$\{
-\lambda_1 + \beta_1 \geq 0,\ 
\lambda_1 - \beta_1 + \beta_2 \geq 0,\ 
\lambda_1 + 2\lambda_2 - \beta_1 \geq 0,\ 
-\lambda_2 + \beta_2 \geq 0,\ 
\beta_1 - 2\beta_2 \geq 0
\},$}
\\
\mathfrak{c}_{10} &= \parbox[t]{0.8\columnwidth}{$\{
\lambda_1 + \lambda_2 - \beta_2 \geq 0,\ 
\lambda_1 + 2\lambda_2 - \beta_1 \geq 0,\ 
2\lambda_2 + \beta_1 - 2\beta_2 \geq 0,\ 
\lambda_1 - \beta_1 + \beta_2 \geq 0,\ 
{-\lambda_1} + \beta_1 \geq 0
\},$}
\\
\mathfrak{c}_{11} &= \parbox[t]{0.8\columnwidth}{$\{2\lambda_{{2}}+\beta_{{1}}-2\beta_{{2}}
 \geq 0,\ -\lambda_{{2}}+\beta_{{2}}
 \geq 0,\ -\beta_{{1}}+\beta_{{2}}
 \geq 0,\ \lambda_{{1}}-\beta_{{1}}
 \geq 0\},$}
\\
\mathfrak{c}_{12} &= \parbox[t]{0.8\columnwidth}{$\{
-\lambda_2 + \beta_2 \geq 0,\ 
\lambda_2 \geq 0,\ 
\beta_1 - 2\beta_2 \geq 0,\ 
\lambda_1 - \beta_1 \geq 0
\},$}
\\
\mathfrak{c}_{13} &= \parbox[t]{0.8\columnwidth}{$\{-\beta_{{1}}+2\beta_{{2}}
 \geq 0,\ -\lambda_{{1}}+\beta_{{1}}-\beta_{{2}}
 \geq 0,\ \lambda_{{1}}
 \geq 0,\ \lambda_{{2}}-\beta_{{2}}
 \geq 0\},$}
\\
\mathfrak{c}_{14} &= \parbox[t]{0.8\columnwidth}{$\{-\lambda_{{1}}+\beta_{{1}}-\beta_{{2}}
 \geq 0,\ \lambda_{{1}}+2\lambda_{{2}}-\beta_{{1}}
 \geq 0,\ -\lambda_{{2}}+\beta_{{2}}
 \geq 0,\ \beta_{{1}}-2\beta_{{2}}
 \geq 0\},$}
\\
\mathfrak{c}_{15} &= \parbox[t]{0.8\columnwidth}{$\{-\beta_{{1}}+\beta_{{2}}
 \geq 0,\ -\lambda_{{2}}+\beta_{{2}}
 \geq 0,\ 2\lambda_{{2}}+\beta_{{1}}-2\beta_{{2}}
 \geq 0,\ \lambda_{{1}}+\lambda_{{2}}-\beta_{{2}}
 \geq 0,\ -\lambda_{{1}}+\beta_{{1}}
 \geq 0\},$}
\\
\mathfrak{c}_{16} &= \parbox[t]{0.8\columnwidth}{$\{
-\lambda_1 - \lambda_2 + \beta_2 \geq 0,\ 
\lambda_1 + 2\lambda_2 - \beta_1 \geq 0,\ 
2\lambda_2 + \beta_1 - 2\beta_2 \geq 0,\ 
\lambda_1 - \beta_1 + \beta_1 \geq 0,\ 
\beta_1 - \beta_2 \geq 0
\},$}
\\
\mathfrak{c}_{17} &= \parbox[t]{0.8\columnwidth}{$\{\lambda_{{2}}+\beta_{{1}}-\beta_{{2}}
 \geq 0,\ -2\lambda_{{2}}-\beta_{{1}}+2\beta_{{2}}
 \geq 0,\ -\beta_{{1}}+\beta_{{2}}
 \geq 0,\ \lambda_{{1}}-\beta_{{1}}
 \geq 0\},$}
\\
\mathfrak{c}_{18} &= \parbox[t]{0.8\columnwidth}{$\{\beta_{{1}}-\beta_{{2}}
 \geq 0,\ \lambda_{{2}}
 \geq 0,\ -2\lambda_{{2}}-\beta_{{1}}+2\beta_{{2}}
 \geq 0,\ \lambda_{{1}}-\beta_{{1}}
 \geq 0\},$}
\\
\mathfrak{c}_{19} &= \parbox[t]{0.8\columnwidth}{$\{-\lambda_{{1}}+\beta_{{1}}-\beta_{{2}}
 \geq 0,\ -\lambda_{{2}}+\beta_{{2}}
 \geq 0,\ -\beta_{{1}}+2\beta_{{2}}
 \geq 0,\ \lambda_{{1}}+2\lambda_{{2}}-\beta_{{1}}
 \geq 0,\ \lambda_{{1}}+\lambda_{{2}}-\beta_{{2}}
 \geq 0\},$}
\\
\mathfrak{c}_{20} &= \parbox[t]{0.8\columnwidth}{$\{\lambda_{{1}}+\lambda_{{2}}-\beta_{{1}}+\beta_{{2}}
 \geq 0,\ -\lambda_{{1}}-2\lambda_{{2}}+\beta_{{1}}
 \geq 0,\ -\lambda_{{1}}+\beta_{{1}}-\beta_{{2}}
 \geq 0,\ \beta_{{1}}-2\beta_{{2}}
 \geq 0\},$}
\\
\mathfrak{c}_{21} &= \parbox[t]{0.8\columnwidth}{$\{\lambda_{{1}}+\lambda_{{2}}-\beta_{{2}}
 \geq 0,\ -\lambda_{{1}}+\beta_{{1}}
 \geq 0,\ -2\lambda_{{2}}-\beta_{{1}}+2\beta_{{2}}
 \geq 0,\ -\beta_{{1}}+\beta_{{2}}
 \geq 0\},$}
\\
\mathfrak{c}_{22} &= \parbox[t]{0.8\columnwidth}{$\{-\lambda_{{1}}-\lambda_{{2}}+\beta_{{2}}
 \geq 0,\ \lambda_{{1}}
 \geq 0,\ 2\lambda_{{2}}+\beta_{{1}}-2\beta_{{2}}
 \geq 0,\ -\beta_{{1}}+\beta_{{2}}
 \geq 0\},$}
\\
\mathfrak{c}_{23} &= \parbox[t]{0.8\columnwidth}{$\{\lambda_{{2}}
 \geq 0,\ -\lambda_{{1}}-2\lambda_{{2}}+\beta_{{1}}
 \geq 0,\ \lambda_{{1}}-\beta_{{1}}+\beta_{{2}}
 \geq 0,\ \beta_{{1}}-2\beta_{{2}}
 \geq 0\},$}
\\
\mathfrak{c}_{24} &= \parbox[t]{0.8\columnwidth}{$\{-2\lambda_{{2}}-\beta_{{1}}+2\beta_{{2}}
 \geq 0,\ \beta_{{1}}-\beta_{{2}}
 \geq 0,\ \lambda_{{1}}+\lambda_{{2}}-\beta_{{2}}
 \geq 0,\ -\lambda_{{1}}+\beta_{{1}}
 \geq 0,\ \lambda_{{1}}+2\lambda_{{2}}-\beta_{{1}}
 \geq 0\},$}
\\
\mathfrak{c}_{25} &= \parbox[t]{0.8\columnwidth}{$\{-\lambda_{{1}}-2\lambda_{{2}}+\beta_{{1}}
 \geq 0,\ -\lambda_{{1}}+\beta_{{1}}-\beta_{{2}}
 \geq 0,\ -\beta_{{1}}+2\beta_{{2}}
 \geq 0,\ \lambda_{{1}}+\lambda_{{2}}-\beta_{{2}}
 \geq 0\},$}
\\
\mathfrak{c}_{26} &= \parbox[t]{0.8\columnwidth}{$\{\lambda_{{1}}
 \geq 0,\ -\lambda_{{1}}-\lambda_{{2}}+\beta_{{2}}
 \geq 0,\ -\lambda_{{1}}+\beta_{{1}}-\beta_{{2}}
 \geq 0,\ \lambda_{{1}}+2\lambda_{{2}}-\beta_{{1}}
 \geq 0\},$}
\\
\mathfrak{c}_{27} &= \parbox[t]{0.8\columnwidth}{$\{\lambda_{{1}}+2\lambda_{{2}}+\beta_{{1}}-2\beta_{{2}}
 \geq 0,\ -\lambda_{{1}}-\lambda_{{2}}+\beta_{{2}}
 \geq 0,\ -2\lambda_{{2}}-\beta_{{1}}+2\beta_{{2}}
 \geq 0,\ -\beta_{{1}}+\beta_{{2}}
 \geq 0\},$}
\\
\mathfrak{c}_{28} &= \parbox[t]{0.8\columnwidth}{$\{-\lambda_{{1}}-2\lambda_{{2}}+\beta_{{1}}
 \geq 0,\ -\beta_{{1}}+2\beta_{{2}}
 \geq 0,\ 2\lambda_{{2}}+\beta_{{1}}-2\beta_{{2}}
 \geq 0,\ \lambda_{{1}}-\beta_{{1}}+\beta_{{2}}
 \geq 0,\ \lambda_{{1}}+\lambda_{{2}}-\beta_{{2}}
 \geq 0\},$}
\\
\mathfrak{c}_{29} &= \parbox[t]{0.8\columnwidth}{$\{\lambda_{{2}}
 \geq 0,\ -\lambda_{{1}}-2\lambda_{{2}}+\beta_{{1}}
 \geq 0,\ -2\lambda_{{2}}-\beta_{{1}}+2\beta_{{2}}
 \geq 0,\ \lambda_{{1}}+\lambda_{{2}}-\beta_{{2}}
 \geq 0\},$}
\\
\mathfrak{c}_{30} &= \parbox[t]{0.8\columnwidth}{$\{-\lambda_{{1}}-\lambda_{{2}}+\beta_{{2}}
 \geq 0,\ \beta_{{1}}-\beta_{{2}}
 \geq 0,\ \lambda_{{1}}+2\lambda_{{2}}-\beta_{{1}}
 \geq 0,\ -2\lambda_{{2}}-\beta_{{1}}+2\beta_{{2}}
 \geq 0\},$}
\\
\mathfrak{c}_{31} &= \parbox[t]{0.8\columnwidth}{$\{-\lambda_{{1}}-2\lambda_{{2}}+\beta_{{1}}
 \geq 0,\ -\lambda_{{1}}+\beta_{{1}}-\beta_{{2}}
 \geq 0,\ 2\lambda_{{1}}+2\lambda_{{2}}-\beta_{{1}}
 \geq 0,\ -\lambda_{{1}}-\lambda_{{2}}+\beta_{{2}}
 \geq 0\},$}
\\
\mathfrak{c}_{32} &= \parbox[t]{0.8\columnwidth}{$\{2\lambda_{{2}}+\beta_{{1}}-2\beta_{{2}}
 \geq 0,\ -\lambda_{{1}}-\lambda_{{2}}+\beta_{{2}}
 \geq 0,\ -\lambda_{{1}}-2\lambda_{{2}}+\beta_{{1}}
 \geq 0,\ \lambda_{{1}}-\beta_{{1}}+\beta_{{2}}
 \geq 0\},$}
\\
\mathfrak{c}_{33} &= \parbox[t]{0.8\columnwidth}{$\{-\lambda_{{1}}-\lambda_{{2}}+\beta_{{2}} 
 \geq 0,\ \lambda_{{1}}+2\lambda_{{2}}-\beta_{{2}}
 \geq 0,\ -\lambda_{{1}}-2\lambda_{{2}}+\beta_{{1}}
 \geq 0,\ -2\lambda_{{2}}-\beta_{{1}}+2\beta_{{2}}
 \geq 0\}.$}
\end{align*}
}

In order to get a feeling for this decomposition of $(\Lambda \otimes_\mathbf{Z} \mathbf{R}) \times (Q \otimes_\mathbf{Z} \mathbf{R})$, consider the intersection of the fan with the affine plane given by $\lambda = (1, 2)$ as indicated in figure \ref{fig:cone-intersection}.
In figure \ref{fig:b2-zerlegung-cartesian}, you find a visualization of the induced decomposition of $Q \otimes_\mathbf{Z} \mathbf{R}$ in Cartesian coordinates with respect to the Killing form.
The highest weight $\omega_1 + 2\omega_2$ corresponds to the upper right corner.
Note that this figure describes the structure of the weight multiplicity function of any module of highest weight $k \lambda$ for $k \in \mathbf{Z}_{>0}$.

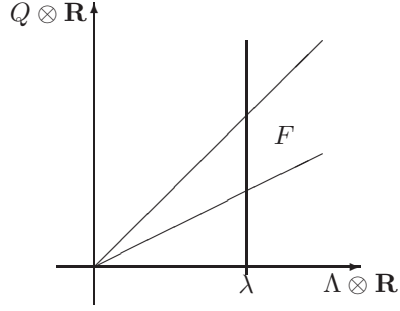
\begin{figure}
\centering
\setlength{\unitlength}{1cm}
\begin{picture}(4,4)(-0.5,-0.5)
\put(-0.5,0){\vector(1,0){4}}
\put(0,-0.5){\vector(0,1){4}}
\put(3.5,-0.1){\makebox(0,0)[t]{$\Lambda \otimes \mathbf{R}$}}
\put(-0.1,3.5){\makebox(0,0)[tr]{$Q \otimes \mathbf{R}$}}
\put(0,0){\line(1,1){3}}
\put(0,0){\line(2,1){3}}
\put(2.5,1.75){\makebox(0,0){$F$}}
\put(2,-0.1){\line(0,1){3.1}}
\put(2,-0.1){\makebox(0,0)[t]{$\lambda$}}
\thicklines
\put(2,1){\line(0,1){1}}
\end{picture}
\caption{Intersecting $F$.}
\label{fig:cone-intersection}
\end{figure}

\begin{figure}
\centering
\input{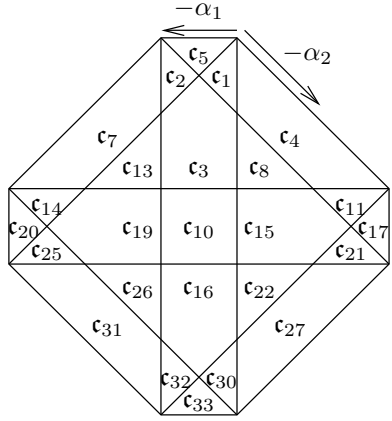}
\caption{Induced decomposition of the Weyl polytope for the highest weight $\lambda = \omega_1 + 2\omega_2$.}
\label{fig:b2-zerlegung-cartesian}
\end{figure}

The quasi-polynomials describing the weight multiplicity function on the above cones are

{
%\small
\begin{align*}
f_{1}&=\parbox[t]{0.8\columnwidth}{$\frac{1}{2}\beta_1+\frac{1}{2}\beta_2+\beta_2\beta_1-\frac{1}{2}\beta_2^2+\frac{7}{8}-\frac{1}{4}\beta_1^2+\frac{1}{8}(-1)^{\beta_1}$,}\\
f_{2}&=\parbox[t]{0.8\columnwidth}{$\frac{1}{2}\lambda_1-\frac{1}{2}\beta_1+\frac{3}{2}\beta_2-\frac{1}{4}\lambda_1^2+\frac{1}{2}\beta_2^2+{\frac{7}{8}}-\frac{1}{4}\beta_1^2+\frac{1}{8}(-1)^{\beta_1+\lambda_1}+\frac{1}{2}\lambda_1\beta_1$,}\\
f_{3}&=\parbox[t]{0.8\columnwidth}{$\frac{1}{8}(-1)^{\beta_1}+\frac{1}{2}\lambda_1+\frac{1}{2}\beta_2-\frac{1}{4}\lambda_1^2+\frac{3}{4}-\frac{1}{2}\beta_2^2-\frac{1}{2}\beta_1^2+\frac{1}{2}\lambda_1\beta_1+\frac{1}{8}(-1)^{\beta_1+\lambda_1}+\beta_2\beta_1$,}\\
f_{4}&=\parbox[t]{0.8\columnwidth}{$\frac{1}{8}(-1)^{\beta_1}+\beta_1+\frac{1}{4}\beta_1^2+{\frac{7}{8}}$,}\\
f_{5}&=\parbox[t]{0.8\columnwidth}{$1+\frac{3}{2}\beta_2+\frac{1}{2}\beta_2^2$,}\\
f_{6}&=\parbox[t]{0.8\columnwidth}{$\frac{1}{2}\lambda_2+\frac{1}{2}\beta_1+\beta_2\beta_1-\beta_2^2-\frac{1}{4}\beta_1^2+\lambda_2\beta_2-\frac{1}{2}{\lambda_{
{2}}}^2+\frac{1}{8}(-1)^{\beta_1}+{\frac{7}{8}}$,}\\
f_{7}&=\parbox[t]{0.8\columnwidth}{$\lambda_1-\beta_1+2\beta_2+\frac{1}{4}\lambda_1^2+\frac{1}{8}(-1)^{\beta_1+\lambda_1}+\beta_2^2+{\frac{7}{8}}-\frac{1}{2}\lambda_1\beta_1+\frac{1}{4}\beta_1^2+\lambda_1\beta_2-\beta_2\beta_1$,}\\
f_{8}&=\parbox[t]{0.8\columnwidth}{$\frac{1}{2}\lambda_1+\frac{1}{2}\beta_1-\frac{1}{4}\lambda_1^2+\frac{3}{4}+\frac{1}{2}\lambda_1\beta_1+\frac{1}{8}(-1)^{\beta_1+\lambda_1}+\frac{1}{8}(-1)^{\beta_1}$,}\\
f_{9}&=\parbox[t]{0.8\columnwidth}{$\frac{1}{2}\lambda_2+\frac{1}{2}\lambda_1-\frac{1}{2}\beta_1+\beta_2-\frac{1}{4}\lambda_1^2-\frac{1}{4}\beta_1^2+\lambda_2\beta_2+\frac{1}{2}\lambda_1\beta_1-\frac{1}{2}\lambda_2^2+\frac{1}{8}(-1)^{\beta_1+\lambda_1}+{\frac{7}{8}}$,}\\
f_{10}&=\parbox[t]{0.8\columnwidth}{$\frac{1}{2}\lambda_2+\frac{1}{2}\lambda_1-\frac{1}{4}\lambda_1^2+\frac{3}{4}-\beta_2^2-\frac{1}{2}\beta_1^2+\lambda_2\beta_2+\frac{1}{8}(-1)^{\beta_1}+\frac{1}{2}\lambda_1\beta_1-\frac{1}{2}\lambda_2^2+\frac{1}{8}(-1)^{\beta_1+\lambda_1}+\beta_2\beta_1$,}\\
f_{11}&=\parbox[t]{0.8\columnwidth}{$\frac{1}{2}\lambda_2+\beta_1-\frac{1}{2}\beta_2-\frac{1}{2}\beta_2^2+\frac{1}{4}\beta_1^2+{\frac{7}{8}}+\lambda_2\beta_2+\frac{1}{8}(-1)^{\beta_1}-\frac{1}{2}\lambda_2^2$,}\\
f_{12}&=\parbox[t]{0.8\columnwidth}{$1+\frac{1}{2}\lambda_2+\beta_2+\lambda_2\beta_2-\frac{1}{2}\lambda_2^2$,}\\
f_{13}&=\parbox[t]{0.8\columnwidth}{$\lambda_1-\frac{1}{2}\beta_1+\beta_2+\frac{1}{4}\lambda_1^2+\frac{3}{4}+\lambda_1\beta_2+\frac{1}{8}(-1)^{\beta_1}+\frac{1}{8}(-1)^{\beta_1+\lambda_1}-\frac{1}{2}\lambda_1\beta_1$,}\\
f_{14}&=\parbox[t]{0.8\columnwidth}{$\frac{1}{2}\lambda_2+\lambda_1-\beta_1+\frac{3}{2}\beta_2+\frac{1}{4}\lambda_1^2+\frac{1}{2}\beta_2^2+\lambda_1\beta_2+\frac{1}{4}\beta_1^2+\lambda_2\beta_2-\frac{1}{2}\lambda_2^2-\beta_2\beta_1-\frac{1}{2}\lambda_1\beta_1+{\frac{7}{8}}+\frac{1}{8}(-1)^{\beta_1+\lambda_1}$,}\\
f_{15}&=\parbox[t]{0.8\columnwidth}{$\frac{1}{2}\lambda_2+\frac{1}{2}\lambda_1+\frac{1}{2}\beta_1-\frac{1}{2}\beta_2+\frac{1}{8}(-1)^{\beta_1}-\frac{1}{4}\lambda_1^2+\frac{3}{4}-\frac{1}{2}\beta_2^2+\lambda_2\beta_2-\frac{1}{2}\lambda_2^2+\frac{1}{8}(-1)^{\beta_1+\lambda_1}+\frac{1}{2}\lambda_1\beta_1$,}\\
f_{16}&=\parbox[t]{0.8\columnwidth}{$\lambda_2+\lambda_1-\frac{1}{2}\beta_2+\frac{1}{8}(-1)^{\beta_1}+\frac{1}{4}\lambda_1^2+\frac{3}{4}-\frac{1}{2}\beta_2^2-\lambda_1\beta_2-\frac{1}{2}\beta_1^2+\lambda_2\lambda_1+\frac{1}{2}\lambda_1\beta_1+\frac{1}{8}(-1)^{\beta_1+\lambda_1}+\beta_2\beta_1$,}\\
f_{17}&=\parbox[t]{0.8\columnwidth}{$1+\frac{3}{2}\lambda_2+\frac{3}{2}\beta_1-\frac{3}{2}\beta_2+\frac{1}{2}\beta_2^2+\frac{1}{2}\beta_1^2-\lambda_2\beta_2+\frac{1}{2}\lambda_2^2+\lambda_2\beta_1-\beta_2\beta_1$,}\\
f_{18}&=\parbox[t]{0.8\columnwidth}{$1+\frac{3}{2}\lambda_2+\beta_1-\beta_2-\lambda_2\beta_2+\frac{1}{2}\lambda_2^2+\lambda_2\beta_1$,}\\
f_{19}&=\parbox[t]{0.8\columnwidth}{$\frac{1}{2}\lambda_2+\lambda_1-\frac{1}{2}\beta_1+\frac{1}{2}\beta_2+\frac{1}{4}\lambda_1^2+\frac{3}{4}-\frac{1}{2}\beta_2^2+\lambda_1\beta_2+\lambda_2\beta_2-\frac{1}{2}\lambda_2^2-\frac{1}{2}\lambda_1\beta_1+\frac{1}{8}(-1)^{\beta_1}+\frac{1}{8}(-1)^{\beta_1+\lambda_1}$,}\\
f_{20}&=\parbox[t]{0.8\columnwidth}{$1+\frac{1}{2}\beta_2^2+\frac{1}{2}\lambda_1^2+\frac{1}{2}\lambda_2^2+\lambda_2\lambda_1+\lambda_2\beta_2+\lambda_1\beta_2+\frac{3}{2}\beta_2+\frac{3}{2}\lambda_2+\frac{3}{2}\lambda_1-\frac{3}{2}\beta_1+\frac{1}{2}\beta_1^2-\lambda_2\beta_1-\beta_2\beta_1-\lambda_1\beta_1$,}\\
f_{21}&=\parbox[t]{0.8\columnwidth}{$\frac{3}{2}\lambda_2+\frac{1}{2}\lambda_1+\beta_1-\frac{3}{2}\beta_2-\frac{1}{4}\lambda_1^2+\frac{1}{2}\beta_2^2+\frac{1}{4}\beta_1^2-\lambda_2\beta_2+{\frac{7}{8}}+\frac{1}{2}\lambda_2^2+\frac{1}{8}(-1)^{\beta_1+\lambda_1}+\lambda_2\beta_1+\frac{1}{2}\lambda_1\beta_1-\beta_2\beta_1$,}\\
f_{22}&=\parbox[t]{0.8\columnwidth}{$\lambda_2+\lambda_1+\frac{1}{2}\beta_1-\beta_2+\frac{1}{4}\lambda_1^2+\frac{3}{4}-\lambda_1\beta_2+\lambda_2\lambda_1+\frac{1}{8}(-1)^{\beta_1}+\frac{1}{2}\lambda_1\beta_1+\frac{1}{8}(-1)^{\beta_1+\lambda_1}$,}\\
f_{23}&=\parbox[t]{0.8\columnwidth}{$1+\beta_2+\lambda_1-\beta_1+\frac{1}{2}\lambda_2^2+\lambda_2\lambda_1+\lambda_2\beta_2+\frac{3}{2}\lambda_2-\lambda_2\beta_1$,}\\
f_{24}&=\parbox[t]{0.8\columnwidth}{$\frac{3}{2}\lambda_2+\frac{1}{2}\lambda_1+\frac{1}{2}\beta_1-\beta_2-\frac{1}{4}\lambda_1^2-\frac{1}{4}\beta_1^2-\lambda_2\beta_2+\frac{1}{2}\lambda_2^2+\frac{1}{2}\lambda_1\beta_1+\lambda_2\beta_1+{\frac{7}{8}}+\frac{1}{8}(-1)^{\beta_1+\lambda_1}$,}\\
f_{25}&=\parbox[t]{0.8\columnwidth}{$\frac{3}{2}\lambda_2+\frac{3}{2}\lambda_1-\beta_1+\frac{1}{2}\beta_2+\frac{1}{2}\lambda_1^2-\frac{1}{2}\beta_2^2+\lambda_1\beta_2+\frac{1}{4}\beta_1^2+\lambda_2\lambda_1+\lambda_2\beta_2+\frac{1}{2}\lambda_2^2-\lambda_1\beta_1+{\frac{7}{8}}+\frac{1}{8}(-1)^{\beta_1}-\lambda_2\beta_1$,}\\
f_{26}&=\parbox[t]{0.8\columnwidth}{$\frac{3}{4}+\frac{3}{4}\lambda_1^2+\lambda_2\lambda_1+\lambda_2+\frac{3}{2}\lambda_1-\frac{1}{2}\lambda_1\beta_1+\frac{1}{8}(-1)^{\beta_1}-\frac{1}{2}\beta_1+\frac{1}{8}(-1)^{\beta_1+\lambda_1}$,}\\
f_{27}&=\parbox[t]{0.8\columnwidth}{$2\lambda_2+\lambda_1+\beta_1-2\beta_2+\frac{1}{4}\lambda_1^2+\beta_2^2-\lambda_1\beta_2+\frac{1}{4}\beta_1^2+\lambda_2\lambda_1-2\lambda_2\beta_2+\lambda_2^2+\frac{1}{2}\lambda_1\beta_1+\frac{1}{8}(-1)^{\beta_1+\lambda_1}-\beta_2\beta_1+\lambda_2\beta_1+{\frac{7}{8}}$,}\\
f_{28}&=\parbox[t]{0.8\columnwidth}{$\frac{3}{2}\lambda_2+\lambda_1-\frac{1}{2}\beta_1-\beta_2^2-\frac{1}{4}\beta_1^2+\lambda_2\lambda_1+\lambda_2\beta_2+\frac{1}{2}\lambda_2^2+\beta_2\beta_1-\lambda_2\beta_1+{\frac{7}{8}}+\frac{1}{8}(-1)^{\beta_1}$,}\\
f_{29}&=\parbox[t]{0.8\columnwidth}{$1+\frac{5}{2}\lambda_2+\lambda_1-\beta_2+\lambda_2\lambda_1-\lambda_2\beta_2+\frac{3}{2}\lambda_2^2$,}\\
f_{30}&=\parbox[t]{0.8\columnwidth}{$2\lambda_2+\lambda_1+\frac{1}{2}\beta_1-\frac{3}{2}\beta_2+\frac{1}{4}\lambda_1^2+\frac{1}{2}\beta_2^2-\lambda_1\beta_2-\frac{1}{4}\beta_1^2+\lambda_2\lambda_1-2\lambda_2\beta_2+\lambda_2^2+\frac{1}{2}\lambda_1\beta_1+{\frac{7}{8}}+\frac{1}{8}(-1)^{\beta_1+\lambda_1}+\lambda_2\beta_1$,}\\
f_{31}&=\parbox[t]{0.8\columnwidth}{${\frac{7}{8}}+\lambda_2^2+\lambda_1^2+2\lambda_2\lambda_1+2\lambda_2+2\lambda_1-\beta_1+\frac{1}{4}\beta_1^2-\lambda_2\beta_1-\lambda_1\beta_1+\frac{1}{8}(-1)^{\beta_1}$,}\\
f_{32}&=\parbox[t]{0.8\columnwidth}{$2\lambda_2+\frac{3}{2}\lambda_1-\frac{1}{2}\beta_1-\frac{1}{2}\beta_2+\frac{1}{2}\lambda_1^2-\frac{1}{2}\beta_2^2-\lambda_1\beta_2-\frac{1}{4}\beta_1^2+2\lambda_2\lambda_1+\lambda_2^2+\frac{1}{8}(-1)^{\beta_1}+\beta_2\beta_1+{\frac{7}{8}}-\lambda_2\beta_1$,}\\
f_{33}&=\parbox[t]{0.8\columnwidth}{$1+3\lambda_2+\frac{3}{2}\lambda_1-\frac{3}{2}\beta_2+\frac{1}{2}\lambda_1^2+\frac{1}{2}\beta_2^2-\lambda_1\beta_2+2\lambda_2\lambda_1-2\lambda_2\beta_2+2\lambda_2^2$.}
\end{align*} }

An example on how to use these tables:
In order to determine the character of $V(\lambda)$ for $\lambda = 4 \omega_1 + 8 \omega_2)$ we can observe that the tuples $(\lambda, \beta)$ belong to $\mathfrak{c}_1$ for $\beta \in \{ 0,\ \alpha_1 + \alpha_2,\ 2\alpha_1 + \alpha_2,\ 2\alpha_1 + 2\alpha_2,\ 3\alpha_1 + 2\alpha_2,\ 4\alpha_1 + 2\alpha_2,\ 3\alpha_1 + 3\alpha_2,\ 4\alpha_1 + 3\alpha_2,\ 4\alpha_1 + 4\alpha_2 \}$.
So by evaluating the quasi-polynomial $f_1$, we immediately get the following weight multiplicities:

\begin{align*}
K^{(4,8)}_{(0,0)} &= 1, &
K^{(4,8)}_{(1,1)} &= 2, &
K^{(4,8)}_{(2,1)} &= 3, \\
K^{(4,8)}_{(2,2)} &= 4, &
K^{(4,8)}_{(3,2)} &= 5, &
K^{(4,8)}_{(4,2)} &= 6, \\
K^{(4,8)}_{(3,3)} &= 6, &
K^{(4,8)}_{(4,3)} &= 8, &
K^{(4,8)}_{(4,4)} &= 9.
\end{align*}

Note that if you want to compare the values e.\,g.\ using the LiE online calculator by A.~Cohen et al.%
\footnote{\url{http://www-math.univ-poitiers.fr/~maavl/LiE/form.html}}%
\ you have to take into account that LiE uses the inverse parametrization of the simple roots, and that LiE denotes weights absolutely, not with respect to the highest weight of the module under consideration.
The necessary reparametrization is
\begin{align*}
\tilde\lambda_1 &= \lambda_2, &
\tilde\mu_1 &= \lambda_2 + \beta_1 - 2\beta_2, \\
\tilde\lambda_2 &= \lambda_1, &
\tilde\mu_2 &= \lambda_1 - 2\beta_1 + 2\beta_2.
\end{align*}

\section{Some conclusions}

\begin{corollary}
The weight $0$ does not occur in $V(\lambda)$ unless $\lambda = i \alpha_1 + j \alpha_2$ for nonnegative integers $i, j$ such that $\frac{i}{2} \leq j \leq i$.
In this case \[ \dim V(\lambda)_0 = \frac{i}{2} - i^2 + 3ij - 2j^2 + \frac{3 + (-1)^i}{4}. \]
\end{corollary}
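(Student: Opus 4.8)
The plan is to specialise the apparatus of the preceding sections to the weight $\mu = 0$, equivalently to $\beta = \lambda$, and to treat the two assertions of the corollary separately.

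First, the support condition. If the weight $0$ occurs in $V(\lambda)$, then $\lambda$ is dominant, and since all weights of $V(\lambda)$ lie in $\lambda + Q$ we have $0 \in \lambda + Q$, i.e.\ $\lambda \in Q$. As $\{\alpha_1, \alpha_2\}$ is a $\mathbf{Z}$-basis of $Q$, this means $\lambda = i\alpha_1 + j\alpha_2$ for unique integers $i, j$. From the Cartan matrix of type $B_2$ with $\alpha_2$ long, namely $\alpha_1 = 2\omega_1 - \omega_2$ and $\alpha_2 = -2\omega_1 + 2\omega_2$, one gets $\lambda = (2i-2j)\omega_1 + (2j-i)\omega_2$, so $\lambda$ is dominant precisely when $2i-2j \ge 0$ and $2j-i \ge 0$, that is, $\frac{i}{2} \le j \le i$; this already forces $i \ge 0$, and then $j \ge 0$. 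Conversely, when $\lambda$ is a dominant element of $Q$ the weight $0$ lies in $\lambda + Q$ and in the convex hull of the Weyl orbit of $\lambda$ (as $w_0 = -\mathrm{id}$ in type $B_2$), hence is a weight of $V(\lambda)$ by the saturation property of the weight set; in any case its positivity will follow from the formula below. This proves the ``unless'' clause.

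For the multiplicity, set $\beta = \lambda$, so $\beta_1 = i$, $\beta_2 = j$ and $\dim V(\lambda)_0 = K^\lambda_\lambda = \Phi_A(B \cdot (2i-2j,\,2j-i,\,i,\,j)^t)$. The next step is to locate the point $(\lambda_1,\lambda_2,\beta_1,\beta_2) = (2i-2j,\,2j-i,\,i,\,j)$ in the fan on $(\Lambda\otimes_\mathbf{Z}\mathbf{R})\times(Q\otimes_\mathbf{Z}\mathbf{R})$: each of the five linear forms defining $\mathfrak{c}_{10}$ evaluates on this point to $i-j$ or to $2j-i$, both nonnegative for $\frac{i}{2} \le j \le i$, and strictly positive for $\frac{i}{2} < j < i$ (so the relative interior of the admissible region lands in the interior of $\mathfrak{c}_{10}$). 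Hence the point lies in $\mathfrak{c}_{10}$, on which, by Theorem~\ref{th:sturmfels}, the weight multiplicity function agrees with the single quasi-polynomial $f_{10}$ listed above; therefore $\dim V(\lambda)_0 = f_{10}(2i-2j,\,2j-i,\,i,\,j)$.

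It remains to substitute and collect terms. Since $\lambda_1 = 2(i-j)$ is even, $(-1)^{\beta_1} = (-1)^{\beta_1+\lambda_1} = (-1)^i$, so the two exponential terms of $f_{10}$ add up to $\frac14(-1)^i$, which together with the constant $\frac34$ yields $\frac{3+(-1)^i}{4}$; the linear part $\frac12\lambda_2 + \frac12\lambda_1$ collapses to $\frac{i}{2}$; and the quadratic part $-\frac14\lambda_1^2 - \beta_2^2 - \frac12\beta_1^2 + \lambda_2\beta_2 + \frac12\lambda_1\beta_1 - \frac12\lambda_2^2 + \beta_1\beta_2$ becomes $-i^2 + 3ij - 2j^2$. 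Adding the three pieces reproduces exactly the stated expression, which, as a concave function of $j$ on $[\frac{i}{2}, i]$ taking the value $\frac{i}{2} + \frac{3+(-1)^i}{4} \ge 1$ at both endpoints, is positive, so $0$ genuinely occurs. The points demanding care are the type-$B_2$ conventions --- which simple root is long, and the consequent identification $\Lambda/Q \cong \mathbf{Z}/2\mathbf{Z}$ under which $\lambda \in Q$ iff $\lambda_1$ is even --- and singling out the right cone $\mathfrak{c}_{10}$ among the thirty-three; the remaining algebra is routine. As a check, $\lambda = \omega_2$ corresponds to $i = j = 1$ and the formula returns $1$, the dimension of the zero weight space of the five-dimensional standard representation.
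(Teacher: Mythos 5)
Your proposal is correct and follows the paper's own argument exactly: reduce to $\lambda \in Q$, convert $\lambda = i\alpha_1 + j\alpha_2$ to the fundamental-weight coordinates $(2i-2j,\,2j-i)$, observe that $(\lambda_1,\lambda_2,\beta_1,\beta_2)=(2i-2j,\,2j-i,\,i,\,j)$ lies in $\mathfrak{c}_{10}$, and evaluate $f_{10}$ there. You merely carry out explicitly the cone-membership check and the algebraic simplification that the paper leaves implicit, and your verification (all five defining forms of $\mathfrak{c}_{10}$ reduce to $i-j$ or $2j-i$) and the final collection of terms are accurate.
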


\begin{proof}
The weight $0$ does not occur in $V(\lambda)$ unless $\lambda \in Q$, so suppose $\lambda = i \alpha_1 + j \alpha_2 = (2i-2j)\omega_1 + (-i+2j)\omega_2$.
The inequalities imposed on $i, j$ are equivalent to $\lambda$ being dominant.
We calculate $K^{(2i-2j, -i+2j)}_{(i,j)}$ using the above results:
The vector $(2i-2j, -i+2j, i, j)$ is contained in $\mathfrak{c}_{10}$, so we get $\dim V(\lambda)_0$ by evaluating $f_{10}$ at this vector.
This yields the asserted formula.
\end{proof}

It is well known that $\dim V(\lambda)_\lambda = 1$ for all dominant weights $\lambda$.
But what is $\dim V(\lambda)_{\lambda-\epsilon}$ for some fixed $\epsilon \in Q$?
See figure \ref{fig:around-lambda} for the picture of the Weyl polytope around the highest weight.

\begin{figure}
\centering
\setlength{\unitlength}{1cm}
\begin{picture}(3,2)(-1.5,-1.5)
\put(0,0){\line(-1,0){1.5}}
\put(0,0){\line(-1,-1){1.5}}
\put(0,0){\line(0,-1){1.5}}
\put(0,0){\line(1,-1){1.5}}
\put(0,0){\circle*{0.1}}
\put(-1,0){\circle*{0.1}}
\put(-1,-1){\circle*{0.1}}
\put(0,-1){\circle*{0.1}}
\put(1,-1){\circle*{0.1}}
\put(0,0.0){\makebox(0,0.4)[t]{$\lambda$}}
\put(-1,0.0){\makebox(0,0.4)[t]{$\lambda-\alpha_1$}}
\put(1.2,-1){\makebox(0,0)[l]{$\lambda-\alpha_2$}}
\put(-1.2,-1){\makebox(0,0)[r]{$\lambda-2\alpha_1-\alpha_2$}}
\put(-1.2,-0.5){\makebox(0,0){$\mathfrak{c}_5$}}
\put(-0.5,-1.2){\makebox(0,0){$\mathfrak{c}_1$}}
\put(0.5,-1.2){\makebox(0,0){$\mathfrak{c}_4$}}
\end{picture}
\caption{Around the highest weight.}
\label{fig:around-lambda}
\end{figure}
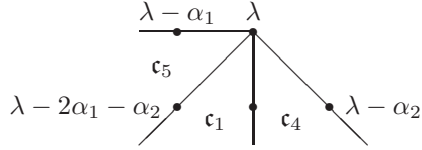

\begin{corollary}
Let $\lambda = \lambda_1 \omega_1 + \lambda_2 \omega_2$ be a dominant weight.
Then the weight multiplicities in $V(\lambda)$ of weights close to $\lambda$ are given by
\begin{align*}
\dim V(\lambda)_{\lambda-\alpha_1} &= 1 \text{ if } \lambda_1 \geq 1, \\
\dim V(\lambda)_{\lambda-2\alpha_1-\alpha_2} &= 3 \text{ if } \lambda_1 \geq 2,\ \lambda_2 \geq 1, \\
\dim V(\lambda)_{\lambda-\alpha_1-\alpha_2} &= 2 \text{ if } \lambda_1, \lambda_2 \geq 1, \\
\dim V(\lambda)_{\lambda-\alpha_2} &= 1 \text{ if } \lambda_2 \geq 1.
\end{align*}
\end{corollary}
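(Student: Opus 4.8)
The plan is to reduce each of the four assertions to a single evaluation of one of the quasi-polynomials $f_i$ tabulated above, exactly as in the proof of the previous corollary. Since $\alpha_1$, $\alpha_2$, $\alpha_1+\alpha_2$ and $2\alpha_1+\alpha_2$ all lie in the root lattice $Q$, for each weight $\mu = \lambda-\epsilon$ in question one has $\dim V(\lambda)_\mu = K^\lambda_\beta$ with $\beta = \epsilon$, and hence $\dim V(\lambda)_\mu = \Phi_A\bigl(B\cdot(\lambda_1,\lambda_2,\beta_1,\beta_2)^t\bigr)$. Writing $\epsilon = \epsilon_1\alpha_1+\epsilon_2\alpha_2$, the four cases correspond to the integral tuples $(\lambda_1,\lambda_2,1,0)$, $(\lambda_1,\lambda_2,0,1)$, $(\lambda_1,\lambda_2,1,1)$ and $(\lambda_1,\lambda_2,2,1)$.

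First I would locate each tuple among the cones $\mathfrak{c}_1,\dots,\mathfrak{c}_{33}$; figure~\ref{fig:around-lambda} already shows that near the highest weight only $\mathfrak{c}_1$, $\mathfrak{c}_4$ and $\mathfrak{c}_5$ can occur, so it suffices to test the defining inequalities of those three. One checks that, for $\lambda$ dominant, the tuple $(\lambda_1,\lambda_2,1,0)$ lies in $\mathfrak{c}_5$ as soon as $\lambda_1\geq 1$; that $(\lambda_1,\lambda_2,0,1)$ lies in $\mathfrak{c}_4$ as soon as $\lambda_2\geq 1$; that $(\lambda_1,\lambda_2,1,1)$ lies in $\mathfrak{c}_1$ as soon as $\lambda_1,\lambda_2\geq 1$; and that $(\lambda_1,\lambda_2,2,1)$ lies in $\mathfrak{c}_5$ as soon as $\lambda_1\geq 2$ and $\lambda_2\geq 1$. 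Each of these is a routine verification of finitely many linear inequalities. A minor point to watch is that some of these tuples sit on the boundary of the relevant cone (for instance $(\lambda_1,\lambda_2,1,0)$ lies on the facet $\beta_2=0$ of $\mathfrak{c}_5$, and $(\lambda_1,\lambda_2,1,1)$ lies on the common facet $\beta_1=\beta_2$ of $\mathfrak{c}_1$ and $\mathfrak{c}_4$); this is harmless because, by Theorem~\ref{th:sturmfels}, the quasi-polynomial attached to a maximal cone $C$ coincides with the partition function on all of $C\cap\mathbf{Z}^n$, boundary included, and it moreover furnishes an internal consistency check that $f_1$ and $f_4$ return the same value on that shared facet.

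It then remains to substitute. Evaluating $f_5$ at $\beta_2=0$ gives $1$ and at $\beta_2=1$ gives $3$; evaluating $f_4$ at $\beta_1=0$ gives $1$; and evaluating $f_1$ at $\beta_1=\beta_2=1$ gives $2$. These are precisely the four asserted values. I do not expect any genuine obstacle: the whole argument is table lookup plus arithmetic, and the only steps demanding care are the bookkeeping of the identification $\beta=\epsilon$ and the correct placement of each tuple in the fan, for which figure~\ref{fig:around-lambda} does most of the work.
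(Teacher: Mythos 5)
Your proposal is correct and matches the paper's own argument: identify which of the cones $\mathfrak{c}_1$, $\mathfrak{c}_4$, $\mathfrak{c}_5$ contains each tuple $(\lambda_1,\lambda_2,\beta_1,\beta_2)$ and evaluate the corresponding quasi-polynomial, with the same cone assignments ($f_5$ for $\beta=(1,0)$ and $(2,1)$, $f_1$ or $f_4$ for $(1,1)$, $f_4$ for $(0,1)$) and the same resulting values. The paper only works out the first case and says the rest are similar, so your explicit verification of all four, together with the remark that boundary lattice points are covered because the quasi-polynomial agrees with $\Phi_A$ on the closed cone, is if anything slightly more complete.
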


\begin{proof}
The first equation can be seen as follows:
$\dim V(\lambda)_{\lambda-\alpha_1} = K^{(\lambda_1,\lambda_2)}_{(1,0)}$.
For $\lambda_1 \geq 1$, the vector $(\lambda_1, \lambda_2, 1, 0)$ is in $\mathfrak{c}_5$.
The value of $f_5$ at this vector is $1$.

The remaining equations can be shown similarly.
Note that in order to show the second and third equation, one can use either $f_5$ or $f_1$ respectively either $f_1$ or $f_4$.
\end{proof}

\section*{Acknowledgements}

This article was prepared during a stay at the Dipartimento di Matematica of the Universit\`a di Roma ``Tor Vergata.''
I thank W.~Baldoni and the dipartimento for their hospitality.

%:bibliography

\end{document}